\documentclass[11pt]{amsart}

\usepackage[T1]{fontenc}
\usepackage{amsmath}
\usepackage{amssymb}
\usepackage{hyperref}
\usepackage[osf,sc]{mathpazo}

\newcommand{\isomto}{\overset{\sim}{\rightarrow}}

\newcommand{\surjto}{\twoheadrightarrow}
\newcommand{\injto}{\hookrightarrow}

\def\Q{{\mathbf{Q}}} \def\Z{{\mathbf{Z}}} 
 \def\G{{\mathbf{G}}} \def\R{{\mathbf{R}}}

\def\th{{t}}

\def\H{{\mathrm{H}}}
\def\PP{{\mathbb{P}}}

\def\End{{\rm End}}

\def\Gal{{\rm Gal}}

\def\Ext{{\rm Ext}}
\def\Reg{{\rm Reg}}

\def\sep{{\rm sep}}
\DeclareMathOperator{\coker}{coker}
\DeclareMathOperator{\Pic}{Pic}
\def\Lie{{\rm Lie}}

\def\Tr{{\rm Tr}}
\def\Nm{{\rm Nm}}

\def\phi{{\varphi}}

\newcommand\dash{\nobreakdash-\hspace{0pt}}

\theoremstyle{plain}
\newtheorem{theorem}{Theorem}
\newtheorem{corollary}{Corollary}
\newtheorem{lemma}{Lemma}
\newtheorem{proposition}{Proposition}
\newtheorem{conjecture}{Conjecture}

\theoremstyle{definition}
\newtheorem{definition}{Definition}
\newtheorem{example}{Example}

\newtheorem{remark}{Remark}
\newtheorem{remarks}{Remarks}

\newcommand{\llb}{[\mspace{-1mu}[}  
\newcommand{\rrb}{]\mspace{-1mu}]}  
\newcommand{\llp}{(\mspace{-2.5mu}(}  
\newcommand{\rrp}{)\mspace{-2.5mu})}  

\newcommand{\kthth}{k\llp \th^{-1}\rrp}  

\begin{document}

\markboth{Lenny Taelman}
 {A Dirichlet unit theorem for Drinfeld modules}

\title
 {A Dirichlet unit theorem for Drinfeld modules}

\author{Lenny Taelman}

\begin{abstract}
We show that the module of integral points on a Drinfeld module satisfies a an
analogue of Dirichlet's unit theorem, despite its failure to be finitely generated.
As a consequence, we obtain a construction of a canonical finitely generated sub-module
of the module of integral points. We use the results to give a precise formulation
of a conjectural analogue of the class number formula. 
\end{abstract}

\maketitle

\tableofcontents

\section{Introduction}

Let $ F $ be a number field. Consider the exponential exact sequence
\begin{equation}\label{multunif}
	(2\pi i \Z)^{r_2} \injto ( F \otimes_\Q \R )^{\Tr=0} \overset{\exp}{\longrightarrow}
	( F \otimes_\Q \R )^{\times,\,\Nm=\pm 1} \surjto \{\pm 1 \}^{r_1},
\end{equation}
where
\[
	\exp(z)=\sum_{i\geq 0} \frac{z^i}{i!}
\]
is the usual exponential function and $r_1$ (resp. $r_2$) is the number of real (resp. complex) places of $ F $. Denote the ring of integers of $ F $ by $ O_F $. The sequence
(\ref{multunif}) induces an exact sequence 
\[
	(2\pi i \Z)^{r_2} \injto \exp^{-1}( O_F^\times ) \to O_F^\times \to
	\{\pm 1\}^{r_1},
\]
and Dirichlet's unit theorem is equivalent to the statement that 
\[
	\exp^{-1}( O_F^\times ) \subset ( F \otimes_\Q \R )^{\Tr=0}
\]
is discrete and co-compact ({\it see e.g.} \cite[{\sc i.\S 8}]{Tate84}).

Poonen has shown that the module of integral points on a Drinfeld module of positive rank
is not finitely generated \cite{Poonen97}. Nevertheless, we will show that it satisfies
an analogue of the above formulation of Dirichlet's unit theorem. The statement will be given in the next section, the proof is in section \ref{proof}. 

In section \ref{Carlitz} we discuss the results in the special case of the Carlitz module, which is in many ways the proper function field analogue of the multiplicative group, and in section
\ref{cnf} we state a conjectural analogue of the class number formula and some evidence for it.

\section{Statement}\label{statement}

Let $k$ be a finite field of $q$ elements and let $ K $ be a finite extension of the 
rational function field $ k(t) $.  The integral closure of $ k[t] $ in $ K $
will always be denoted by $ R $.

Denote the $q$-th power Frobenius endomorphism of the additive group $ \G_{a,R} $ by
$ \tau $. We denote by $ E $ the additive group $ \G_{a,R} $ equipped with an
action $ \phi $ of $ k[t] $ given by a $ k $\dash algebra homomorphism
\[
	\phi \colon k[t] \to \End( \G_{a,R} ) \colon
	 t \mapsto t + a_1\tau + \cdots + a_n\tau^n,
\]
where $a_i \in R $ and $a_n \neq 0 $.
So $ E $ is a model over $ R $ of a Drinfeld module of rank $ n $ over $ K $. For any
$R$\dash algebra $ B $ we denote by $ E( B ) $ the module of $ B $\dash rational points on
$ E $. This is nothing but the additive group of $ B $ equipped with the $ k[t] $-module
structure given by $\phi$. We do not exclude the case $ n=0 $.

We denote the tangent space at zero of $ E $ by $ \Lie_E $. For any $ R $\dash algebra
$ B $ we have that $ \Lie_E( B ) = B $ on which the action of $ t $ induced by
$ \phi(t) $ is just multiplication by $ t $.

Put $ K_\infty := K \otimes_{k(\th)} \kthth $, let $ \kthth^\sep $ be a
separable closure of $ \kthth $ and put
$ K_\infty^\sep := K  \otimes_{k(\th)} \kthth^\sep $. Note that $ K_\infty $ and
$ K_\infty^\sep $ are products of fields.

There exists a unique power series
\[
	\exp_E(X) = \sum_{i=0}^\infty e_i X^{q^i} \in K\llb X \rrb 
\]
with $ e_0=1 $ and such that
\[
	\phi(t) \exp_E(X)  = \exp_E( \th X ).
\]
Drinfeld has shown \cite[\S 3]{Drinfeld74E} that this power series converges on all
of $ K_\infty^\sep $ and that it
fits in a short exact sequence of $ k[t] $-modules
\[
	\Lambda_E \injto  \Lie_E( K_\infty^\sep ) \overset{\exp_E}{\surjto}
	E( K_\infty^\sep )
\]
where the $k[t]$\dash module $ \Lambda_E $ is discrete in  $\Lie_E( K_\infty^\sep ) $, and free of rank $ n $ times the separable degree of $ K $ over $ k(\th) $. 

This sequence is $ G=\Gal( \kthth^\sep / \kthth ) $-equivariant, and taking invariants gives an exact sequence
\begin{equation}\label{fourterm}
	\Lambda_E^G \injto  \Lie_E(K_\infty) \overset{\exp_E}{\longrightarrow}
	E( K_\infty ) \surjto \H^1( G, \Lambda_E ),
\end{equation}
where the surjectivity of the connecting map $ E( K_\infty ) \to \H^1( G, \Lambda_E ) $ follows from ``additive Hilbert 90'' \cite[{\sc x.\S1}, prop. 1]{Serre62}. In contrast with the case of the multiplicative group (\ref{multunif}), the group
$ \H^1( G, \Lambda_E ) $ is not finitely generated if $ n>0 $. 

\begin{theorem}\label{mainthm}
The cokernel of $ E( R ) \to \H^1( G, \Lambda_E ) $ is finite. The inverse
image under $ \exp $ of $ E( R ) $
is a discrete and co-compact sub-$k[t]$-module of $ \Lie_E( K_\infty ) $.
\end{theorem}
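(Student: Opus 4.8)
The plan is to transport the entire problem from $\Lie_E(K_\infty)$ to $E(K_\infty)$ through $\exp_E$, exploiting that inside $E(K_\infty)$ the submodule $E(R)$ is a genuine lattice. As topological groups one has $\Lie_E(K_\infty)=K_\infty=E(K_\infty)$ (the two $k[t]$\dash module structures differ, but the underlying topology does not), and $E(R)=R$ is the image of the diagonal embedding $R\injto K_\infty$. I would isolate two inputs at the outset: \textbf{(I)} the submodule $R\subset K_\infty$ is discrete and co\dash compact, and \textbf{(II)} $\exp_E$ restricts to an isometric isomorphism on a small neighbourhood of $0$. For (I): since $k[t]$ is a PID and $R$ is a finite torsion\dash free $k[t]$\dash module of rank $d=[K:k(t)]$, it is free, so a choice of basis identifies $K_\infty=R\otimes_{k[t]}\kthth$ with $\kthth^{\,d}$ carrying $R$ as the standard lattice $k[t]^{\,d}$; as $\kthth=k[t]\oplus\m_\infty$ with $\m_\infty$ compact, $K_\infty/R\cong\m_\infty^{\,d}$ is compact and $R$ is discrete. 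This is the function\dash field Riemann--Roch input.

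For (II): the series $\exp_E(X)=X+\sum_{i\geq 1}e_iX^{q^i}$ has invertible linear coefficient $e_0=1$, and Drinfeld's cited convergence lets me run the non\dash archimedean inverse function theorem — equivalently, a Newton\dash polygon estimate on the $e_i$ via the recursion $e_i(t^{q^i}-t)=\sum_{j=1}^{\min(i,n)}a_j e_{i-j}^{q^j}$ — to conclude that on a small enough ball $B$ around $0$ the map $\exp_E$ restricts to a bijective isometry $B\isomto B$. Being an additive continuous map that is a homeomorphism near $0$, $\exp_E\colon K_\infty\to E(K_\infty)$ is then an open map whose image is an \emph{open} subgroup, and whose kernel $\Lambda_E^G$ is discrete; combined with the four\dash term sequence (\ref{fourterm}) this gives a topological isomorphism $K_\infty/\Lambda_E^G\isomto\operatorname{im}(\exp_E)=\ker\!\big(E(K_\infty)\to\H^1(G,\Lambda_E)\big)$. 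I expect this local analysis to be the main obstacle; everything afterwards is formal manipulation of open subgroups of compact groups.

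With (I) and (II) in hand the three assertions follow quickly. For discreteness of $U:=\exp^{-1}(E(R))$, around any $u\in U$ I shrink $B$ so that $\exp_E$ is injective on $u+B$ and the neighbourhood $\exp_E(u+B)$ of $\exp_E(u)$ meets the discrete set $E(R)$ only in $\exp_E(u)$; injectivity then forces $(u+B)\cap U=\{u\}$, so $u$ is isolated. For the cokernel and co\dash compactness, I use that $\operatorname{im}(\exp_E)$ is open in $E(K_\infty)=K_\infty$ while $E(R)=R$ is co\dash compact, so $(\operatorname{im}(\exp_E)+E(R))/E(R)$ is an open — hence closed and finite\dash index — subgroup of the \emph{compact} group $E(K_\infty)/E(R)$. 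Its index equals $\#\coker\!\big(E(R)\to\H^1(G,\Lambda_E)\big)$, proving the first claim; and the topological isomorphisms $K_\infty/U\isomto\operatorname{im}(\exp_E)/\big(E(R)\cap\operatorname{im}(\exp_E)\big)\isomto\big(\operatorname{im}(\exp_E)+E(R)\big)/E(R)$ identify $K_\infty/U$ with this compact subgroup, giving co\dash compactness of $U$.
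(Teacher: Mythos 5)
Your proof is correct and follows essentially the same route as the paper: the key input in both is the non-archimedean inverse function theorem (the paper's Lemma \ref{logD}) giving a local isomorphism for $\exp_E$, combined with the fact that $E(R)=R$ is a discrete, co-compact lattice in $K_\infty$ and the identification of $\H^1(G,\Lambda_E)$ with $\coker(\exp_E)$ coming from the four-term sequence. The only cosmetic difference is that you package the finiteness of the cokernel and the co-compactness as statements about open (hence finite-index, closed) subgroups of the compact quotient $E(K_\infty)/E(R)$, whereas the paper argues with explicit finite spanning sets $V'$ and $V$; the substance is identical.
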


\begin{corollary}
The kernel of $ E( R ) \to \H^1( G, \Lambda_E ) $ is finitely generated.
\end{corollary}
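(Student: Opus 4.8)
The plan is to read off the Corollary directly from Theorem~\ref{mainthm}, for which in fact only the discreteness half of the statement is needed. Write $M \subseteq E(R)$ for the kernel of the map $E(R) \to \H^1(G,\Lambda_E)$, which is the composite of $E(R) \injto E(K_\infty)$ (induced by $R \injto K_\infty$) with the connecting map in (\ref{fourterm}). By exactness of (\ref{fourterm}) at $E(K_\infty)$, the image of $\exp_E$ is exactly the kernel of $E(K_\infty) \to \H^1(G,\Lambda_E)$, so intersecting with $E(R)$ gives
\[
	M = E(R) \cap \exp_E\bigl( \Lie_E(K_\infty) \bigr) = \exp_E(\Lambda'),
	\qquad \Lambda' := \exp_E^{-1}\bigl( E(R) \bigr),
\]
the last equality being the set-theoretic identity $f(f^{-1}(S)) = S \cap \mathrm{im}(f)$. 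Since the functional equation $\phi(t)\exp_E(X) = \exp_E(\th X)$ makes $\exp_E$ a homomorphism of $k[t]$-modules, its restriction realises $M$ as a quotient $k[t]$-module of $\Lambda'$. It thus suffices to prove that $\Lambda'$ is finitely generated over $k[t]$.

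By Theorem~\ref{mainthm}, the module $\Lambda'$ is discrete in $\Lie_E(K_\infty) = K_\infty$. Here $K_\infty$ is a finite-dimensional vector space over the local field $\kthth$, inside which $k[t]$ is itself discrete and co-compact, via the topological decomposition $\kthth = k[t] \oplus \th^{-1} k\llb \th^{-1} \rrb$. The key input is then the function-field lattice structure theorem, the same one already invoked above to describe $\Lambda_E$: a discrete $k[t]$-submodule of a finite-dimensional $\kthth$-vector space is free of finite rank over $k[t]$ (necessarily of rank at most $\dim_{\kthth} K_\infty$, with equality precisely under the co-compactness that Theorem~\ref{mainthm} also supplies). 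I would prove this exactly as over the reals: discreteness makes $\Lambda'$ meet every bounded subset of $K_\infty$ in a finite set, and one then extracts a $k[t]$-basis by repeatedly choosing a shortest nonzero vector, with respect to a norm defining the topology, outside the $\kthth$-span of those already chosen. This structural fact is the only place the topology of $K_\infty$ enters, and is the natural main obstacle, though it is entirely standard.

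Finally, because $k[t]$ is a principal ideal domain, any quotient module of the finitely generated module $\Lambda'$ is again finitely generated. Applying this to the surjection $\exp_E \colon \Lambda' \surjto M$ shows that $M$ is finitely generated, which is the assertion of the Corollary. I emphasise that, granting Theorem~\ref{mainthm}, the argument is very short: the entire analytic and arithmetic difficulty is concentrated in establishing the discreteness (and co-compactness) of $\Lambda'$, after which only elementary module theory over $k[t]$ remains.
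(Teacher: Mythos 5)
Your argument is correct and is precisely the deduction the paper intends (the corollary is stated without proof as an immediate consequence of Theorem~\ref{mainthm}): the kernel is $\exp_E\bigl(\exp_E^{-1}(E(R))\bigr)$ by exactness of (\ref{fourterm}), and a discrete sub-$k[t]$-module of the finite-dimensional $\kthth$-vector space $\Lie_E(K_\infty)$ is free of finite rank, so its image under the $k[t]$-linear map $\exp_E$ is finitely generated over the PID $k[t]$. Your identification of the lattice structure theorem as the only nontrivial ingredient beyond Theorem~\ref{mainthm} is accurate, and your sketch of it is sound.
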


This suggests that the proper analogy is not so much between $ E( R ) $ and $ O_F^\times $, but rather between the complex
\[
	E( R ) \longrightarrow \H^1( G, \Lambda_E ) 
\]
and the complex
\[
	O_F^\times \longrightarrow \{\pm 1\}^{r_1} .
\]

It is quite surprising that Theorem \ref{mainthm} actually holds in case $ E $ is of rank two or more. I don't know anything in the analogy between Drinfeld modules and elliptic curves that hints at such a result.

\section{Proof}\label{proof}

\begin{lemma}\label{logD}
There exist compact open subgroups $ L \subset \Lie( K_\infty ) $
and $ L' \subset E( K_\infty ) $ such that $ \exp_E $ maps $ L $ isomorphically 
onto $ L' $.
\end{lemma}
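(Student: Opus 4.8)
The plan is to work place-by-place on the factors of $K_\infty$ and to show that on a sufficiently small ball around the origin $\exp_E$ restricts to an isometric bijection, exploiting that its linear coefficient $e_0=1$ is a unit while the higher coefficients decay. Write $K_\infty = \prod_v K_v$, the product over the finitely many places $v$ of $K$ lying above the place $t^{-1}$ of $k(t)$; each $K_v$ is a complete field with absolute value $|\cdot|_v$ and valuation ring $O_v$. Since this product is finite, it suffices to produce for each $v$ a ball $B_v = \{x\in K_v : |x|_v \le c_v\}$ on which $\exp_E$ restricts to an isomorphism onto $B_v$, and then to set $L = \prod_v B_v$ and $L' = \exp_E(L)$: each $B_v$ is a compact open $O_v$-submodule of $K_v$, so $L$ and $L'$ are compact open subgroups of $\Lie_E(K_\infty)$ and $E(K_\infty)$. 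Note at the outset that, because $k=\F_q$, each monomial $X\mapsto X^{q^i}$ is $k$-linear, so $\exp_E$ is $k$-linear — in particular additive — wherever it converges; this is what makes ``maps $L$ isomorphically onto $L'$'' meaningful, the map being a homomorphism of groups.

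The key estimate is as follows. Write $\exp_E(X) = X + h(X)$ with $h(X) = \sum_{i\ge 1} e_i X^{q^i}$. Because $\exp_E$ converges on all of $K_\infty^\sep$ by Drinfeld's theorem, the coefficients satisfy $|e_i|_v\, r^{q^i}\to 0$ for every $r$, and hence for a suitably small $c_v$ one has $|e_i|_v\, c_v^{q^i-1} \le \lambda_v < 1$ for all $i\ge 1$ simultaneously: the finitely many small indices are controlled by taking $c_v$ small, and the large indices by the decay of $|e_i|_v$. Consequently $|h(x)|_v \le \lambda_v\,|x|_v$ for all $x\in B_v$.

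From this the whole statement follows by ultrametric and completeness considerations. The bound gives at once that $\exp_E$ is an isometry on $B_v$, so $|\exp_E(x)|_v = |x|_v$ and $\exp_E(B_v)\subseteq B_v$, and that it is injective there, since by additivity of $h$ one has $\exp_E(x)-\exp_E(x') = (x-x') + h(x-x')$, whose norm equals $|x-x'|_v$. For surjectivity onto $B_v$, fix $y\in B_v$ and consider $T(x) = y - h(x)$; it sends $B_v$ into itself and, again using additivity of $h$, satisfies $|T(x)-T(x')|_v = |h(x-x')|_v \le \lambda_v\,|x-x'|_v$, so the contraction mapping principle on the complete ball $B_v$ yields a unique fixed point $x$ with $\exp_E(x)=y$. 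Thus $\exp_E\colon B_v\to B_v$ is a bijective isometry, hence a homeomorphism and a group isomorphism, and taking the product over $v\mid\infty$ finishes the proof. Alternatively one may bound the radius of convergence of the formal inverse $\log_E$ and check directly that $\exp_E$ and $\log_E$ are mutually inverse on a common ball, bypassing the fixed-point argument.

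I expect the only genuine obstacle to be the uniform coefficient estimate $|e_i|_v\, c_v^{q^i-1}\le \lambda_v < 1$ — that is, extracting a single contraction factor valid for all $i$ from the growth and decay behaviour of the $e_i$. Once the linear term is seen to strictly dominate every higher term on a small enough ball, the isometry, injectivity, and surjectivity all follow formally.
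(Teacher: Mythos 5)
Your proof is correct and follows the same route as the paper: the paper identifies $\Lie_E(K_\infty)$ and $E(K_\infty)$ with $K_\infty$, notes that entirety of $\exp_E$ forces the coefficients $e_i$ to decay, and then simply cites the non-archimedean implicit function theorem, which is exactly the isometry-plus-contraction argument you carry out by hand on each factor $K_v$. The only difference is that you unpack that black box explicitly; the substance is identical.
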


\begin{proof}
Identify $ \Lie_E( K_\infty ) $ and $ E( K_\infty ) $ with $ K_\infty $ in the obvious way. 
Define $ \|\cdot\|\colon K_\infty \to \R $ as the maximum of the normalized absolute values on the
components of $ K_\infty $. 
Since 
\[
	\exp_E( x ) = x + \sum_{i=1}^\infty e_i x^{q^i}
\]
is an entire function we have that $ \| e_i \| $ tends to zero. We can therefore apply the non-archimedean implicit function theorem (\emph{see} for example
\cite[2.2]{Igusa00}) to deduce the lemma.
\end{proof}

\begin{remark}
Note that under the identification $ \Lie_E( K_\infty ) = K_\infty = E( K_\infty ) $ we
have that $ L = L' $.
\end{remark}

\begin{proof}[Proof of Theorem \ref{mainthm}]
We first prove that $\exp_E^{-1}(E(R))$ is discrete in $ \Lie_E( K_\infty ) $.

Assume that $ \lambda_1, \lambda_2, \ldots $ is a sequence of elements of
$ \exp_E^{-1}(E(R)) $ that converges to $ 0 $. Then $ (\exp_E(\lambda_i))_i $ converges to 
$ 0 $, but since $ E( R ) $ is discrete in $ E( K_\infty ) $, it follows that
$ \exp_E( \lambda_i ) = 0 $ for all $ i $ sufficiently large. Thus $ \lambda_i \in \Lambda_E $ for
all $ i $ sufficiently large, and as $ \Lambda_E $ is discrete in $ \Lie_E( K_\infty ) $
we have that $ \lambda_i = 0 $ for all $ i $ sufficiently large, and we conclude that
$ \exp^{-1}( E( R ) ) $ is discrete in $ \Lie_E( K_\infty ) $.

Now we show that the cokernel of $ E( R )  \to \H^1( G, \Lambda_E ) $ is finite. Let  $ L $
and $ L' $ be as in Lemma \ref{logD}.
There exists a finite $k$-vector space $ V' \subset E( K_\infty ) $ such that
 $ L' $, $ E( R ) $ and $ V' $ together span $ E( K_\infty ) $ as a $ k $-vector space.
 It follows that $ \H^1( G, \Lambda_E ) $ is spanned by the images of $ E( R ) $ and $ V' $, and in particular that the cokernel of $ E( R ) \to \H^1( G, \Lambda_E ) $ is finite. 

Finally the co-compactness.
Let $  V \subset \Lie_E( K_\infty ) $ be a finite sub-$k$-vector space so that $ \exp_E( V ) $
contains $ V' \cap \exp_E( \Lie_E( K_\infty ) ) $. Then the subspaces
$ L $, $ V $ and $ \exp_E^{-1}( E( R ) ) $
span $ \Lie_E( K_\infty ) $, and since $ L $ and $ V $ are compact it follows that
$ \exp_E^{-1}( E( R ) ) $ is co-compact in $ \Lie_E( K_\infty ) $.
\end{proof}

\section{The case of the Carlitz module}\label{Carlitz}

In this section $ E $ will always denote the Carlitz module, so $ E = \G_{a,k[\th]} $ with $ k[t] $\dash action given by
\[
	\phi\colon t \mapsto \th + \tau.
\]
This Drinfeld module plays a role in function field arithmetic that is very similar to the role
of the multiplicative group in number field arithmetic. 

The exponential exact sequence becomes
\[
	k[t] \tilde{\pi} \injto \Lie_E( \kthth^\sep )
	\surjto E( \kthth^\sep )
\]
with 
\[
	 \tilde{\pi} = \alpha^q \prod_{i=1}^\infty 
	 	\left( 1 - \th^{1-q^i} \right)^{-1},
\]
where $ \alpha $ is a chosen $ (q-1) $\dash st root of $ -\th $ (\emph{see} \cite{Carlitz35}). Of course the sub-module
$ k[t]\tilde{\pi} $ of $ \Lie_E( \kthth^\sep ) $ does not depend on the choice of $ \alpha $.

Denote the kernel of $  E( R ) \to \H^1( G, \Lambda_E ) $ by $ U_R $.

\begin{proposition} Let $ E $ be the Carlitz module and $ R $ and $ K $ as above.
Then $ U_R $
has rank $ d - r $ where $ d $ is the separable degree
of $ K $ over $ k(\th) $ and $ r $ is the number of places $v$ of $ K $ above $ \infty $
such that $ -\th $ has a $(q-1)$\dash st root in $ K_v $. 
\end{proposition}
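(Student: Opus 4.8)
The plan is to compute $\rk_{k[t]} U_R$ through the four-term sequence (\ref{fourterm}). Write $\Lambda := \exp_E^{-1}(E(R)) \subset \Lie_E(K_\infty)$. Since the image of $\exp_E\colon \Lie_E(K_\infty)\to E(K_\infty)$ is exactly the kernel of $E(K_\infty)\to\H^1(G,\Lambda_E)$, the submodule $U_R$ is precisely $\exp_E(\Lambda)$, while the kernel of $\exp_E$ on $\Lie_E(K_\infty)$ is $\Lambda_E^G$. This gives a short exact sequence of $k[t]$\dash modules
\[
	\Lambda_E^G \injto \Lambda \overset{\exp_E}{\surjto} U_R .
\]
Because rank is additive along exact sequences of finitely generated $k[t]$\dash modules (tensor with $k(\th)$), and $U_R$ is finitely generated by the corollary to Theorem \ref{mainthm}, it suffices to compute $\rk\Lambda$ and $\rk\Lambda_E^G$ separately and take the difference.

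For $\rk\Lambda$, I would invoke Theorem \ref{mainthm}: $\Lambda$ is discrete and co-compact in $\Lie_E(K_\infty)$. A discrete co-compact $k[t]$\dash submodule of a finite-dimensional $\kthth$\dash vector space is free of rank equal to that dimension (the exact analogue of a $\Z$\dash lattice in a real vector space: it is finitely generated, torsion-free, and spans). Hence $\rk\Lambda = \dim_{\kthth}\Lie_E(K_\infty) = d$.

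The heart of the argument is $\rk\Lambda_E^G$, which requires the Galois-module structure of $\Lambda_E$. Over $K_\infty^\sep = K\otimes_{k(\th)}\kthth^\sep$ the exponential acts componentwise on the $d$ factors indexed by the embeddings $\iota\colon K\to\kthth^\sep$, and on each factor its kernel is the Carlitz lattice $k[t]\tilde\pi$; thus $\Lambda_E$ has one copy of $k[t]\tilde\pi$ per embedding, with $G$ permuting the factors through its natural action on the set of embeddings and scaling the period. I would first pin down the scaling: since $\tilde\pi = \alpha^q\prod_{i\geq1}(1-\th^{1-q^i})^{-1}$ and the infinite product lies in $\kthth^\times$ (so is $G$\dash fixed), a short computation gives $\sigma(\tilde\pi) = \chi(\sigma)\tilde\pi$ with $\chi(\sigma) := \sigma(\alpha)/\alpha \in \mu_{q-1}(k) = k^\times$, using $\chi(\sigma)^q = \chi(\sigma)$. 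Grouping embeddings into $G$\dash orbits, which correspond to the places $v$ of $K$ above $\infty$ with stabilizers $G_v = \Gal(\kthth^\sep/K_v)$, identifies $\Lambda_E$ with $\bigoplus_{v\mid\infty}\mathrm{Ind}_{G_v}^G(\chi|_{G_v})$ as a $k[t][G]$\dash module; here the $G$\dash action factors through a finite quotient, so the usual representation-theoretic formalism applies.

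Taking $G$\dash invariants and using Frobenius reciprocity, $(\mathrm{Ind}_{G_v}^G\chi|_{G_v})^G$ is the $G_v$\dash fixed part of $k[t]$ under $\chi$, which is free of rank $1$ when $\chi|_{G_v}$ is trivial and is $0$ otherwise (if $\chi(\sigma)\neq1$ then $(\chi(\sigma)-1)c=0$ forces $c=0$ in the domain $k[t]$). Hence $\rk\Lambda_E^G = \#\{v\mid\infty : \chi|_{G_v}=1\}$, and $\chi|_{G_v}$ is trivial exactly when $G_v$ fixes $\alpha$, i.e.\ when $\alpha\in K_v$, i.e.\ when $-\th$ has a $(q-1)$\dash st root in $K_v$; this count is $r$. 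Combining the three steps yields $\rk U_R = d - r$. I expect the main obstacle to be the third step: correctly matching the permutation-with-character structure of $\Lambda_E$ to the decomposition-group data, and verifying that the scaling character is precisely $\sigma\mapsto\sigma(\alpha)/\alpha$ so that its triviality on $G_v$ translates into the stated root condition.
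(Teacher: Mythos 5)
Your proposal is correct and follows essentially the same route as the paper, whose proof is literally the one-line remark that the result ``follows immediately from Theorem \ref{mainthm} and the explicit description of $\Lambda_E$''; you have simply supplied the details that are left implicit there (rank of a discrete co-compact lattice equals $\dim_{\kthth}\Lie_E(K_\infty)$, and the computation of $\Lambda_E^G$ via the induced-module-with-character description). The only point worth flagging is that your identification $\dim_{\kthth}\Lie_E(K_\infty)=d$ tacitly assumes $K/k(\th)$ separable, but this assumption is equally implicit in the paper's own statement, so it is not a defect of your argument relative to the intended proof.
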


\begin{proof}
This follows immediately from Theorem \ref{mainthm} and the explicit description of
$\Lambda_E$ above.
\end{proof}

For example, if $ R=k[\th] $ then  $ U_R $ has rank $1$ if $q>2$ and rank $ 0 $ if $q=2$.

\begin{proposition}\label{torsion}
Let $ E $ be the Carlitz module and $ R $ and $ K $ as above.
Then all torsion elements of $ E( R ) $ are in $ U_R $.
\end{proposition}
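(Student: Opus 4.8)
Recall that $ U_R $ is by definition the kernel of the connecting map $ E(R) \to \H^1(G,\Lambda_E) $ coming from the four-term sequence (\ref{fourterm}); equivalently $ U_R = E(R) \cap \exp_E(\Lie_E(K_\infty)) $, since the kernel of $ E(K_\infty) \to \H^1(G,\Lambda_E) $ is exactly the image of $ \exp_E $ on $ \Lie_E(K_\infty) $. So the plan is to show that every torsion element of $ E(R) $ maps to $ 0 $ in $ \H^1(G,\Lambda_E) $. Let $ P \in E(R) $ be torsion, say $ \phi(a)P = 0 $ with $ 0 \neq a \in k[t] $. Over $ K_\infty^\sep $ the map $ \exp_E $ is surjective, so write $ P = \exp_E(\mu) $; from $ \exp_E(a\mu) = \phi(a)P = 0 $ we get $ a\mu \in \Lambda_E $. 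Since $ P $ is $ G $-invariant, $ \exp_E(\sigma\mu - \mu) = \sigma P - P = 0 $, so the image of $ P $ in $ \H^1(G,\Lambda_E) $ is the class of the cocycle $ \sigma \mapsto \sigma\mu - \mu \in \Lambda_E $. Multiplying by $ a $ gives $ \sigma \mapsto \sigma(a\mu) - a\mu $, a coboundary because $ a\mu \in \Lambda_E $; hence the image of $ P $ lies in the $ a $-torsion of $ \H^1(G,\Lambda_E) $.

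It therefore suffices to prove that $ \H^1(G,\Lambda_E) $ is \emph{torsion-free} as a $ k[t] $-module, and here I would exploit the special shape of the Carlitz lattice. From the explicit formula for $ \tilde\pi $, the only non-$G$-invariant factor is $ \alpha^q $, and for $ \sigma \in G $ one has $ \sigma\alpha = \zeta_\sigma\alpha $ with $ \zeta_\sigma $ a $ (q-1) $-st root of unity, so $ \sigma\tilde\pi = \zeta_\sigma\tilde\pi $ with $ \zeta_\sigma \in k^\times $. Describing $ \Lambda_E $ through the factors of $ K_\infty^\sep = K \otimes_{k(\th)} \kthth^\sep $, the lattice is spanned over $ k[t] $ by the copies of $ \tilde\pi $ in the various factors, and $ G $ acts by permuting these factors and scaling by the $ \zeta_\sigma \in k^\times $. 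Consequently I would identify
\[
	\Lambda_E \;=\; k[t] \otimes_k \Lambda_E^0
\]
as a $ k[t][G] $-module, where $ \Lambda_E^0 $ is the finite-dimensional $ k $-vector space spanned by these period vectors, $ G $ acts $ k $-linearly on $ \Lambda_E^0 $ through a finite quotient, and $ t $ acts as $ t \otimes \id $ with $ G $ acting trivially on the $ k[t] $-factor.

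Granting this description, write $ k[t] = \varinjlim_N k[t]_{\leq N} $ as the filtered union of the spaces of polynomials of degree at most $ N $, each a finite trivial $ G $-module. Since $ \H^1(G,-) $ commutes with filtered colimits of discrete modules and takes finite direct sums to finite direct sums, I obtain a natural isomorphism of $ k[t] $-modules
\[
	\H^1(G,\Lambda_E) \;\cong\; k[t] \otimes_k \H^1(G,\Lambda_E^0),
\]
with $ k[t] $ acting on the left factor. The right-hand side is a free $ k[t] $-module (of possibly infinite rank, consistent with $ \H^1 $ being non-finitely-generated), hence torsion-free, which is exactly what is needed: the image of any torsion $ P $ lies in the vanishing torsion submodule, so $ P \in U_R $.

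The step I expect to require the most care is the identification $ \Lambda_E \cong k[t] \otimes_k \Lambda_E^0 $ with $ t $ acting trivially through $ G $. This is precisely the feature that singles out the Carlitz module: its Galois action on the period lattice is through scalars in $ k^\times $ and preserves a $ t $-fixed $ k $-structure, whereas for a general Drinfeld module the action takes values in $ \GL $ over $ k[t] $ and no such structure need exist, so one should not expect torsion-freeness — and hence this proposition — in general.
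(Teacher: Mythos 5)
Your proof is correct and follows essentially the same route as the paper: the paper's entire argument is that $\H^1(G,\Lambda_E)$ is torsion-free because $\Lambda_E \cong V\otimes_k k[t]$ for a $k[G]$-module $V$, whence $\H^1(G,\Lambda_E)\cong \H^1(G,V)\otimes_k k[t]$. Your additional cocycle computation showing that a torsion point maps to a torsion class is correct but can be shortened, since the connecting map is already a morphism of $k[t]$-modules.
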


\begin{proof}
In fact, $\H^1( G, \Lambda_E ) $ is torsion-free, since there is a 
$ k[G] $-module $ V $ such that $ \Lambda_E = V \otimes_k k[t] $, hence
$ \H^1( G, \Lambda_E ) = \H^1( G, V ) \otimes_k k[t] $. 
\end{proof}

\begin{remark}
In case $ K $ is a ``cyclotomic'' extension of $ k(t) $,
Anderson \cite{Anderson96} has defined a finitely generated
sub-module $ \mathcal{L} \subset E(R) $ of ``circular units''. Since these circular units
are constructed as exponentials of elements in $ \Lie_E( K_\infty ) $, one has
$ \mathcal{L} \subset U_R $, and comparing ranks
one finds that the quotient is finite. It follows from Proposition \ref{torsion} that
$ U_R $ is in fact the divisible closure of $ \mathcal{L} $ in $ E( R ) $.
\end{remark}

\section{A conjectural class number formula}\label{cnf}

Finally we discuss a conjectural analogue of the class number formula. We continue with the notation of the previous section, in particular $ E $ denotes the Carlitz module over $ k[t] $.

\begin{definition}
For a finite $ k[t] $\dash module $ M $, we denote by $ |M| \in k[t] $ the monic generator
of the first fitting ideal of $ M $.
\end{definition}
This is a $ k[t] $\dash analogue of the cardinality of
a finite abelian group. Explicitely, if $f_i \in k[t]$ are monic polynomials such that
$ M \cong \bigoplus_i k[t]/(f_i) $ then $ |M| = \prod_i f_i $.

\begin{definition}
We define $ \zeta_R( 1 ) \in \kthth $ by
\[
	\zeta_R( 1 ) = \sum_{I\subset R} \frac{1}{|R/I|},
\]
where $ I $ ranges over the non-zero ideals of $ R $.
\end{definition}
Note that in contrast with the classical (archimedean) harmonic series, this infinite sum converges. 

\begin{remark} This ad hoc definition of $\zeta_R( 1 ) $ suffices for our purposes. Goss has defined an analogue $ \zeta_R( s ) $ of the Dedekind zeta function, of which $ \zeta_R( 1 ) $ is in fact a value. We refer to \cite{Goss92} for the details.
\end{remark}

By Theorem \ref{mainthm} the natural map
\[
	\rho\colon \exp^{-1}( E( R ) ) \otimes_{k[t]} \kthth 
	\longrightarrow
	\Lie_E( R ) \otimes_{k[t]} \kthth
\]
induced by the inclusion $ \exp_E^{-1}( E(R) ) \to \Lie_E( K_\infty ) $ is
an isomorphism of $ \kthth $-vector spaces. Since both source and target have a natural 
$ k[t] $-module structure, the map has a well-defined determinant in
\[
	\det(\rho) \in \kthth^\times / k[t]^\times,
\]
obtained by taking the determinant with respect to any chosen $ k[t] $\dash bases for 
$\exp^{-1}( E( R ) )$ and $ \Lie_E( R ) $.

\begin{definition}
The \emph{regulator} of $ R $, denoted $ \Reg_R $ is the unique monic representative 
in $ \kthth $ of $\det(\rho)$.
\end{definition}

\begin{definition}
We denote the cokernel of $ E( R ) \to \H^1( G, \Lambda_E ) $ by $ H_R $.
\end{definition}

We can now state the conjectural analogue of the class number formula.

\begin{conjecture}\label{conjecture}
$\zeta_R( 1 ) = \Reg_R \cdot | H_R | $.
\end{conjecture}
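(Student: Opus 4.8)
The statement is the precise function-field analogue of the analytic class number formula: the left side $\zeta_R(1)$ is a product of purely \emph{finite} (reduction-modulo-$\mathfrak p$) data, while $\Reg_R$ is a transcendental $\infty$-adic covolume and $|H_R|$ a global finite invariant. The plan is therefore to manufacture a single determinant-type invariant that can be evaluated in two genuinely different ways — globally through covolumes in $K_\infty$ (yielding $\Reg_R\cdot|H_R|$) and locally through the reductions $E(R/\mathfrak p)$ (yielding the Euler product for $\zeta_R(1)$) — and then to equate the two evaluations.

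First I would install an \emph{index} (covolume) formalism for discrete co-compact sub-$k[t]$-modules of $\Lie_E(K_\infty)$, taking values in $\kthth^\times/k[t]^\times$ and realised by monic representatives in $\kthth$, using the Fitting-ideal invariant $|\cdot|$ in place of counting measure. The two properties needed are multiplicativity $[M:P]=[M:N][N:P]$ and compatibility with short exact sequences, so that $[M:N]=|Q|$ whenever $0\to N\to M\to Q\to 0$ with $Q$ finite. In this language $\Reg_R=[\Lie_E(R):\exp_E^{-1}(E(R))]$ is exactly $\det(\rho)$, and Theorem~\ref{mainthm} is precisely what makes $\exp_E^{-1}(E(R))$ a genuine lattice so that these indices are defined. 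Note that $E(R)$ and $\Lie_E(R)$ coincide as subsets of $K_\infty$, so $\exp_E^{-1}(E(R))=\exp_E^{-1}(R)$, and $\Reg_R$ measures how $\exp_E^{-1}$ distorts the standard lattice $R$ inside $\bigl(K_\infty,\ \text{mult. by }t\bigr)$.

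Next I would reduce the global identity to a product of local ones. Writing $\zeta_R(1)=\prod_{\mathfrak p}\bigl(1-|R/\mathfrak p|^{-1}\bigr)^{-1}$ as an Euler product over the finite places, and checking for the Carlitz module that the factor equals $|\Lie_E(R/\mathfrak p)|/|E(R/\mathfrak p)|$, the target becomes
\[
 [\Lie_E(R):\exp_E^{-1}(E(R))]\cdot|H_R| \;=\; \prod_{\mathfrak p}\frac{|\Lie_E(R/\mathfrak p)|}{|E(R/\mathfrak p)|}.
\]
The factor $|H_R|$ and the passage between $\exp_E^{-1}(E(R))$ and $E(R)$ are then handled purely formally, by additivity of the index over
\[
 0\to \Lambda_E^G \to \exp_E^{-1}(E(R)) \xrightarrow{\exp_E} U_R \to 0, \qquad 0\to U_R \to E(R) \to \H^1(G,\Lambda_E) \to H_R \to 0,
\]
the contributions of $\Lambda_E^G$ and $U_R$ cancelling so that only $\Reg_R\cdot|H_R|$ survives. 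One must be careful here because $\H^1(G,\Lambda_E)$ is infinite; these sequences are used only to transport covolumes through $\exp_E$ and to isolate the finite defect $H_R$, never to attach a covolume to $\H^1$ itself.

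The heart of the matter, and the step I expect to be the main obstacle, is the \emph{trace formula} producing the finite Euler product from the $\infty$-adic distortion of $\exp_E$. I would regard $\phi(t)$ acting on $R$ as a nuclear (that is, $t^{-1}$-contracting) operator — the decay $\|e_i\|\to 0$ of Lemma~\ref{logD} is exactly what supplies the nuclearity — so that regularised determinants over $\kthth$ of power series in it converge and are well defined. The key identity to prove is that the covolume ratio $[\Lie_E(R):\exp_E^{-1}(E(R))]\cdot|H_R|$ equals the regularised determinant computing $\prod_{\mathfrak p}|\Lie_E(R/\mathfrak p)|/|E(R/\mathfrak p)|$; concretely one interpolates the finite quotients $R/\mathfrak p$ by truncating $\exp_E$ modulo increasing powers of $t^{-1}$ and shows the successive truncations telescope into the local Euler factors. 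The genuine difficulty is that this is a global-to-local comparison: one must relate the transcendental $\infty$-adic volume of $\exp_E^{-1}(R)$ to finite mod-$\mathfrak p$ data, which is where the convergence estimates and a $K$-theoretic product formula for the determinant of the two-term complex $[E(R)\to\H^1(G,\Lambda_E)]$ have to be combined. Once this trace formula is established, specialising to the Carlitz module and feeding in the identification of $\prod_{\mathfrak p}|\Lie_E(R/\mathfrak p)|/|E(R/\mathfrak p)|$ with $\zeta_R(1)$ finishes Conjecture~\ref{conjecture}.
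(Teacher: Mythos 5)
The statement you are addressing is not proved in the paper: it is stated as a conjecture, and the paper offers only partial evidence for it (the case $R=k[t]$, where it reduces to Carlitz's identity $\zeta_R(1)=\log(1)$; the equality of valuations $v(\Reg_R\cdot|H_R|)=0$, which is Theorem \ref{valuations}; and one numerical example). So there is no proof in the paper to compare yours against, and any complete argument would necessarily go beyond what the paper establishes.

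Your proposal, as written, is a strategy outline rather than a proof, and the gap sits exactly at the step you yourself flag as ``the heart of the matter.'' The bookkeeping parts are fine: the covolume formalism with values in $\kthth^\times/k[t]^\times$ is essentially Lemma \ref{degrees}, the Euler-factor identification $\bigl(1-|R/\mathfrak{p}|^{-1}\bigr)^{-1}=|\Lie_E(R/\mathfrak{p})|/|E(R/\mathfrak{p})|$ for the Carlitz module is a finite computation, and transporting covolumes through $\exp_E$ while isolating the finite defect $H_R$ is the same snake-lemma manipulation the paper uses to prove Theorem \ref{valuations}. But the assertion that the global quantity $[\Lie_E(R):\exp_E^{-1}(E(R))]\cdot|H_R|$ equals the product of local factors $\prod_{\mathfrak{p}}|\Lie_E(R/\mathfrak{p})|/|E(R/\mathfrak{p})|$ is not a lemma you may defer: it \emph{is} the conjecture, merely rewritten in index notation. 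Saying that one should regard $\phi(t)$ as a nuclear operator, interpolate the finite quotients by truncating $\exp_E$, and show the truncations telescope names a plausible mechanism --- indeed the one the paper's closing remarks point to via V.~Lafforgue's trace formulas --- but supplies none of the actual analysis: no definition of the regularised determinant, no convergence proof, and no argument that it computes either side of the identity. Until that trace formula is actually established, you have reformulated the conjecture, not proved it.
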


The remainder of this paper contains some evidence towards this conjecture, and some remarks on its interpretation.

\begin{proposition}Conjecture \ref{conjecture} is true if $ R=k[t] $. 
\end{proposition}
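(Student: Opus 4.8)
The plan is to specialise every quantity in the conjecture to $R=k[t]$ and to check the equality directly. Here $K=k(\th)$, so the separable degree is $d=1$, $K_\infty=\kthth$ is a single field, $E(K_\infty)=\Lie_E(K_\infty)=\kthth$, and $\Lambda_E=k[t]\tilde\pi$. I write $v_\infty$ for the valuation on $\kthth$ and $\log_E$ for the formal inverse of $\exp_E$. First I would compute the left-hand side. Since the nonzero ideals of $k[t]$ are generated by monic polynomials $a$ with $|R/(a)|=a$, one has $\zeta_R(1)=\sum_a 1/a$, the sum over monic $a\in k[t]$ taken in $\kthth$. Grouping by degree and invoking the classical Carlitz evaluation $\sum_{\deg a=d}1/a=(-1)^d/L_d$, with $L_d=\prod_{i=1}^d(\th^{q^i}-\th)$, this collapses to $\zeta_R(1)=\sum_{d\ge0}(-1)^d/L_d=\log_E(1)$. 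The terms tend to $0$, so the series converges; hence $\exp_E(\log_E(1))=1\in E(R)$ and $\log_E(1)\in\exp_E^{-1}(E(R))$. Moreover $\log_E(1)=1+(\text{terms of positive order in }\th^{-1})$, so it is monic of degree $0$.

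Next I would show $|H_R|=1$. By the four-term sequence (\ref{fourterm}), $H_R$ is the cokernel of the inclusion $E(R)+\exp_E(\Lie_E(K_\infty))\subseteq E(K_\infty)=\kthth$. The same estimate on $\deg L_d$ shows that $\log_E$ converges on $\m_\infty:=\th^{-1}\F_q\llb\th^{-1}\rrb$, so that $\m_\infty\subseteq\exp_E(\kthth)$; since $\kthth=k[t]\oplus\m_\infty=E(R)+\m_\infty$, this gives $H_R=0$.

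It remains to identify $\Reg_R$, which is the monic representative of a generator $\omega$ of the rank-one free $k[t]$-lattice $\exp_E^{-1}(E(R))$. This generator is characterised, up to $k^\times$, as the element of maximal $v_\infty$. On the one hand $\log_E(1)\in\exp_E^{-1}(E(R))$ has $v_\infty=0$, so $v_\infty(\omega)\ge0$. On the other hand, writing $\exp_E(X)=\sum e_iX^{q^i}$, one computes $v_\infty(e_i)=iq^i$, so that if $v_\infty(\omega)\ge1$ the term $i=0$ dominates in $\exp_E(\omega)=\omega+\sum_{i\ge1}e_i\omega^{q^i}$ and $v_\infty(\exp_E(\omega))=v_\infty(\omega)\ge1$. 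But then $\exp_E(\omega)\in E(R)=k[t]$ has positive $v_\infty$, hence vanishes, forcing $\omega\in\Lambda_E^G$. Every nonzero element of $\Lambda_E^G\subseteq k[t]\tilde\pi$ has strictly negative $v_\infty$ --- indeed $\Lambda_E^G=0$ when $q>2$, while for $q=2$ one has $v_\infty(\tilde\pi)=-2$ --- so this is impossible. Therefore $v_\infty(\omega)=0$, and comparing with $\log_E(1)$ gives $\omega=\log_E(1)$ up to a factor in $k^\times$; thus $\Reg_R=\log_E(1)$. Combining the three computations, $\Reg_R\cdot|H_R|=\log_E(1)=\zeta_R(1)$.

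The main obstacle is this last step: upgrading the easy matching of ranks and of $\infty$-adic sizes to the exact statement that $\log_E(1)$ generates the integral lattice $\exp_E^{-1}(E(R))$ and that $H_R$ vanishes. The valuation argument does this uniformly in $q$, but it rests on two points that I would set up carefully: the non-archimedean inversion $\exp_E\circ\log_E=\id$ on the relevant disc (so that $\log_E(1)$ and the elements of $\m_\infty$ genuinely lie in $\exp_E^{-1}(E(R))$ and in $\exp_E(\kthth)$), and the fact that the fixed lattice $\Lambda_E^G$ is concentrated in $v_\infty<0$, which is what rules out a smaller generator and is the only place where the dichotomy $q=2$ versus $q>2$ intervenes.
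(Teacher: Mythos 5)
Your proof is correct and takes essentially the same route as the paper's: the paper likewise reduces the claim to the three facts that $\exp_E^{-1}(E(R))$ is generated by $\log_E(1)$, that $E(R)\to \H^1(G,\Lambda_E)$ is surjective (i.e.\ $H_R=0$), and Carlitz's identity $\zeta_R(1)=\log_E(1)$. The paper dismisses the first two as easy verifications and cites Carlitz for the third; your valuation arguments simply supply those verifications in detail, and they check out.
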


\begin{proof}
For $ R = k[t] $ one verifies easily that $ \exp^{-1}( E(R) ) $ is
generated by $ \log(1) $, where $ \log $ denotes the Carlitz logarithm,
and that $ E( R ) \to \H^1( G, \Lambda_E ) $ is surjective. The conjecture then
boils down to the identity $ \zeta_R(1) = \log(1) $, which was proven by Carlitz in
\cite{Carlitz35}. 
\end{proof}

The valuation of $ \zeta_R(1) $ is zero, so also the right-hand side in the conjecture should have valuation zero. This is indeed the case:
\begin{theorem}\label{valuations}
$ v( \Reg_R \cdot | H_R | ) =0$.
\end{theorem}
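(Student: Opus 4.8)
The plan is to translate both $v(\Reg_R)$ and $v(|H_R|)$ into Haar-measure data for the locally compact additive group $K_\infty$, and then to show that the two contributions cancel. Fix a Haar measure $\mu$ on $K_\infty$; by Theorem~\ref{mainthm} both $\exp_E^{-1}(E(R))$ and $\Lie_E(R)=R$ are discrete and co-compact in $\Lie_E(K_\infty)=K_\infty$, so they have finite positive covolumes. First I would check that $v(\Reg_R)=v(\det\rho)=\log_q\bigl(\covol(R)/\covol(\exp_E^{-1}(E(R)))\bigr)$: writing $\rho$ on $k[t]$-bases as a matrix $C$ over $\kthth$, the lattice $\exp_E^{-1}(E(R))$ is the image of $\Lie_E(R)$ under the $\kthth$-linear map with matrix $C$, so $\covol(\exp_E^{-1}(E(R)))=|\det C|_\infty\,\covol(R)$ with $|\cdot|_\infty=q^{-v}$. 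Secondly, since $|H_R|\in k[t]$ is monic of degree $\log_q\#H_R$ and $v(\th)=-1$, one has $v(|H_R|)=-\log_q\#H_R$. Thus the theorem reduces to the single measure-theoretic identity
\[
	\covol(\Lie_E(R)) = \#H_R \cdot \covol\bigl(\exp_E^{-1}(E(R))\bigr).
\]

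To prove this identity I would exploit two features of $\exp_E$. First, in characteristic $p$ the derivative of $\exp_E(X)=\sum_i e_iX^{q^i}$ is identically $1$ (every term with $i\geq1$ has vanishing derivative), so $\exp_E$ has constant unit Jacobian and therefore preserves $\mu$ wherever it is injective; in particular, taking the compact open $L\subset K_\infty$ of Lemma~\ref{logD} to be a small enough ball, we have $L=L'$, the map $\exp_E$ carries $L$ measure-preservingly onto itself, and $L$ meets neither lattice. Secondly, under the identification $\Lie_E(K_\infty)=K_\infty=E(K_\infty)$ the underlying additive subgroups $\Lie_E(R)$ and $E(R)$ coincide, so $\covol(\Lie_E(R))=\covol(E(R))$. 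Writing $I=\exp_E(\Lie_E(K_\infty))$ for the open image, one has $U_R=E(R)\cap I$ and, by definition of $H_R$, an identification $E(K_\infty)/(I+E(R))\cong H_R$, whence $[K_\infty:I+E(R)]=\#H_R$.

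The computation then runs as follows. Using $\covol(\Gamma)=\mu(L)\,[K_\infty:L+\Gamma]$ for any co-compact $\Gamma$ with $L\cap\Gamma=0$, together with the fact that $\exp_E$ is index-preserving on subgroups containing its kernel $\Lambda_E^G\subseteq\exp_E^{-1}(E(R))$, I would compute $[K_\infty:L+\exp_E^{-1}(E(R))]=[I:L+U_R]$. On the other side, since $L\subseteq I$, the Dedekind modular law gives $I\cap(L+E(R))=L+(I\cap E(R))=L+U_R$, so that $[I+E(R):L+E(R)]=[I:L+U_R]$; combining with $[K_\infty:I+E(R)]=\#H_R$ yields $[K_\infty:L+E(R)]=\#H_R\cdot[I:L+U_R]$. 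Multiplying through by $\mu(L)$ and using $\covol(E(R))=\covol(\Lie_E(R))$ produces the displayed identity, and hence $v(\Reg_R\cdot|H_R|)=\log_q\#H_R-\log_q\#H_R=0$.

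The main obstacle is the bookkeeping of the last paragraph. One must be careful that $\exp_E$, which is injective only locally, is invoked only on subgroups containing $\ker\exp_E$ (where it is index-preserving) or on the ball $L$ (where it is an injective, measure-preserving analytic isomorphism), and one must verify the modular-law index identity so that the discrepancy between the two co-compact lattices $\Lie_E(R)$ and $\exp_E^{-1}(E(R))$ is exactly the finite factor $\#H_R$ rather than some spurious index coming from $I$ or from $\Lambda_E^G$.
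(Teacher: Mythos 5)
Your proposal is correct, but it reaches the conclusion by a genuinely different route than the paper. The paper first proves an abstract statement (Lemma \ref{degrees}) identifying $v(\det\rho)$ with a difference of Euler characteristics $\chi_1-\chi_2$ of the complexes $\Lambda_i\oplus L\to V$, proved by reducing to line bundles via $K_0(\PP^1)$; it then relates the two Euler characteristics by a snake-lemma argument producing a five-term exact sequence $\ker\delta_1\injto \exp_E^{-1}(E(R))\oplus L\to \Lie_E(K_\infty)\to\coker\delta_1\surjto H_R$. Your two main steps are exact measure-theoretic avatars of these: the identity $v(\det\rho)=\log_q\bigl(\covol(R)/\covol(\exp_E^{-1}(E(R)))\bigr)$ is Lemma \ref{degrees} specialized to an $L$ meeting neither lattice (where $\chi_i=-\dim_k\coker\delta_i=-\log_q[K_\infty:L+\Lambda_i]$ and $\covol(\Lambda_i)=\mu(L)[K_\infty:L+\Lambda_i]$), and your index identity $[K_\infty:L+E(R)]=\#H_R\cdot[K_\infty:L+\exp_E^{-1}(E(R))]$, obtained from the second isomorphism theorem and the modular law $I\cap(L+E(R))=L+U_R$, is precisely what the paper's five-term sequence encodes when $\ker\delta_1=0$. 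What your route buys is elementarity and self-containedness: the scaling of Haar measure by $|\det C|$ under a linear automorphism replaces the $K_0(\PP^1)$ reduction, and explicit subgroup bookkeeping replaces the snake lemma; the small price is that you must normalize $L$ so that $L\cap E(R)=L\cap\exp_E^{-1}(E(R))=0$ (harmless, since both are discrete by Theorem \ref{mainthm}) and check $\exp_E(L)=L$ for a small ball, whereas the paper's lemma works for arbitrary compact open $L$ and is stated in a reusable form. All the delicate points you flag are handled correctly: $\exp_E$ is only ever applied to subgroups containing $\Lambda_E^G=\ker(\exp_E|_{K_\infty})$, where it is index-preserving, the identification $H_R\cong E(K_\infty)/(I+E(R))$ follows from the four-term sequence (\ref{fourterm}), and the signs work out since $v(|H_R|)=-\dim_k H_R$ for the monic generator of the Fitting ideal.
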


Before proving the theorem we give an algebraic description of the valuation of a 
``regulator''.  

\begin{lemma}\label{degrees}
Let $ V $ be a finite dimensional $ \kthth $-vector space, $ L \subset V $ an open compact
sub-$k$-space and $ \Lambda_1, \Lambda_2 \subset V $ discrete and co-compact sub-$k[t]$-modules. Let 
\[
	 \rho\colon \Lambda_1\otimes_{k[t]} \kthth \isomto \Lambda_2\otimes_{k[t]} \kthth
\]
be the induced isomorphism. For $ i=1,2 $ let $ \delta_i $ be the map
\[
	\Lambda_i \oplus L \to V\colon (\lambda, \ell) \mapsto \lambda - \ell
\]
and put $ \chi_i := \dim_k( \ker \delta_i ) - \dim_k( \coker \delta_i ) $. Then
\[
	v( \det(\rho) ) = \chi_1 - \chi_2
\]
where $\det(\rho)$ is defined with respect to the $k[t]$-structures given by
$\Lambda_1$ and $ \Lambda_2 $.
\end{lemma}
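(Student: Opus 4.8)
The plan is to reduce the statement to the case of an inclusion $\Lambda_2 \subseteq \Lambda_1$ and then read off both sides from the finite $k[t]$\dash module $\Lambda_1/\Lambda_2$. First I would record that the $\chi_i$ are well defined: $\ker\delta_i = \Lambda_i \cap L$ is a discrete subset of the compact set $L$, hence finite, while $\coker\delta_i = V/(\Lambda_i + L)$ is a quotient of the compact group $V/\Lambda_i$ by the open subgroup $(\Lambda_i+L)/\Lambda_i$, hence also finite; both are therefore finite-dimensional over $k$.

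For the reduction I would set $\Lambda_3 := \Lambda_1\cap\Lambda_2$. Clearing denominators produces a nonzero $f\in k[t]$ with $f\Lambda_1\subseteq\Lambda_2$, so $f\Lambda_1\subseteq\Lambda_3$; since multiplication by $f$ is a homeomorphism of $V$ preserving discreteness and co-compactness, $\Lambda_3$ is again discrete, co-compact and of full rank, with $\Lambda_3\subseteq\Lambda_1$ and $\Lambda_3\subseteq\Lambda_2$. Because $\det$ is multiplicative and $v$ additive along $\Lambda_1\to\Lambda_3\leftarrow\Lambda_2$, and since $\chi_1-\chi_2 = (\chi_1-\chi_3)-(\chi_2-\chi_3)$, it suffices to treat each inclusion separately; thus I may assume $\Lambda_2\subseteq\Lambda_1$.

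Assume then $\Lambda_2 \subseteq \Lambda_1$. The inclusion induces a short exact sequence of two-term complexes $[\,\Lambda_i\oplus L \xrightarrow{\delta_i} V\,]$ whose quotient complex is $[\,\Lambda_1/\Lambda_2 \to 0\,]$. The associated long exact cohomology sequence
\[
	0 \to \ker\delta_2 \to \ker\delta_1 \to \Lambda_1/\Lambda_2 \to \coker\delta_2 \to \coker\delta_1 \to 0
\]
consists of finite-dimensional $k$-spaces, so taking the alternating sum of dimensions yields $\chi_1 - \chi_2 = \dim_k(\Lambda_1/\Lambda_2)$. To match the right-hand side, I would invoke the elementary divisor theorem over the PID $k[t]$: there is a basis $w_1,\dots,w_n$ of $\Lambda_1$ and monic $d_1,\dots,d_n\in k[t]$ with $d_1w_1,\dots,d_nw_n$ a basis of $\Lambda_2$. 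With respect to these bases $\rho$ is diagonal with entries $d_i^{-1}$, so $\det(\rho) = (\prod_i d_i)^{-1}$ modulo $k^\times$. As $v(f)=-\deg f$ for $f\in k[t]$, this gives $v(\det\rho) = \sum_i \deg d_i = \dim_k\bigl(\bigoplus_i k[t]/(d_i)\bigr) = \dim_k(\Lambda_1/\Lambda_2)$, and reassembling via multiplicativity settles the general case.

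The routine heart of the argument is the snake-lemma computation, which is automatic once the complexes are arranged. The point requiring genuine care is the final bookkeeping: pinning down $\det(\rho)$ as $(\prod_i d_i)^{-1}$ in the correct direction and confirming the normalization $v(f)=-\deg f$ so that the signs line up, together with the verification that $\Lambda_3$ remains discrete and co-compact of full rank. I expect this sign-and-normalization matching, rather than any structural difficulty, to be the main obstacle.
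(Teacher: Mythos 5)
Your argument for the nested case $\Lambda_2\subseteq\Lambda_1$ is correct and cleanly done: the short exact sequence of two-term complexes gives $\chi_1-\chi_2=\dim_k(\Lambda_1/\Lambda_2)$, and the elementary-divisor computation of $\det(\rho)$ matches it under the normalization $v(f)=-\deg f$ (which is indeed the one the paper uses). The problem is the reduction step. You claim that ``clearing denominators'' yields a nonzero $f\in k[t]$ with $f\Lambda_1\subseteq\Lambda_2$, but this is false in general: the change-of-basis matrix expressing a $k[t]$-basis of $\Lambda_1$ in terms of one of $\Lambda_2$ has entries in $\kthth$, not in $k(t)$, and no polynomial multiple clears a Laurent series that is not a rational function. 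Concretely, take $V=\kthth$, $\Lambda_2=k[t]$ and $\Lambda_1=k[t]\alpha$ with $\alpha\in\kthth^\times\setminus k(t)$; both are discrete and co-compact, yet $\Lambda_1\cap\Lambda_2=0$, so $\Lambda_3$ is not co-compact and the whole reduction collapses. This is not a corner case you can ignore: in the paper's application the two lattices are $\exp_E^{-1}(E(R))$ and $\Lie_E(R)$, which have no reason to be commensurable, so the lemma is genuinely needed in the non-commensurable situation your proof does not reach.

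The paper circumvents this by a different reduction: since $\chi_1-\chi_2$ is independent of $L$ and one may choose coordinates so that $V=\kthth^n$, $\Lambda_2=k[t]^n$, $L=k\llb t^{-1}\rrb^n$, both sides are additive in short exact sequences of such triples, and $K_0(\PP^1)$ is generated by line bundles, it suffices to check the rank-one case $\Lambda_1=k[t]\alpha$ for an \emph{arbitrary} $\alpha\in\kthth^\times$, where $v(\det\rho)=v(\alpha)$ and $\chi_1-\chi_2=v(\alpha)$ by direct inspection. Your nested-case computation would survive as a special instance, but to repair your proof you would need either this kind of rank-one/dévissage argument or some other device that handles a pair of lattices with trivial intersection.
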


\begin{remark} Equivalently one can define $ \chi_i $ to be the Euler characteristic of
the vector bundle on $ \PP^1 $ defined by the triple $ ( V, \Lambda_i, L ) $. 
\end{remark}

\begin{proof}[Proof of Lemma \ref{degrees}]
Clearly $ \chi_1-\chi_2 $ does not depend on the choice of $ L $. Without loss of generality
we may assume that $ V = \kthth^n $, $ \Lambda_2 = k[t]^n $ and $ L = k\llb t^{-1}\rrb^n $. (So that $(V,\Lambda_2, L )$ defines the vector bundle $ \mathcal{O}^n $.)  

Since both $ v(\det(\rho) ) $ and $ \chi_1 - \chi_2 $ are additive with respect to
short exact sequences it suffices to verify their equality for $ n = 1 $ (since $K_0(\PP^1)$ is generated by line bundles). For $ n = 1 $ it is clear that $ v( \det(\rho) ) = \chi_1 - \chi_2 $.
\end{proof}

\begin{proof}[Proof of Theorem \ref{valuations}] 
We need to show that
\[
	v(\Reg_R) = \dim_k H_R.
\]
We will do so by computing $ v(\Reg_R) $ using Lemma \ref{degrees}, applied to the
$k[t]$-modules $ \exp^{-1}( E(R) ) $ and $ \Lie_E( R ) $ inside $ V = \Lie_E( K_\infty ) $.
Choose $ L \subset V $ small enough so that it satisfies Lemma \ref{logD}.

Note that under the identification $ \Lie_E( K_\infty ) = K_\infty = E( K_\infty ) $
the complex of $k$-vector spaces
\[
	\Lie_E( R ) \oplus L \to \Lie_E( K_\infty )
\]
coincides with
\[
	\delta_1\colon E(R) \oplus L' \to E( K_\infty ).
\]
To compute the valuation of the regulator we need to compare the Euler characteristic 
of $ \delta_1 $ with that of
\[
	\delta_2\colon \exp^{-1}(E(R)) \oplus L \to \Lie_E(K_\infty).
\]

We claim that there is an exact sequence
\begin{equation}\label{fiveterm}
	\ker \delta_1 \injto \exp^{-1}(E(R)) \oplus L
	\overset{\delta_2}{\longrightarrow}
	\Lie_E(K_\infty) \to \coker\delta_1 \surjto H_R
\end{equation}
Together with Lemma \ref{degrees} this directly implies Theorem \ref{valuations}.
To construct this five-term sequence, consider the short exact sequence
\[
	E(R) \oplus L' \injto \Lie_E(K_\infty) \oplus E(R) \oplus L' \surjto \Lie_E(K_\infty)
\]
mapping to the short exact sequence
\[
	E( K_\infty ) \injto E( K_\infty) \surjto 0.
\]
The resulting snake is the desired sequence (\ref{fiveterm}).
\end{proof}

There is also some numerical evidence for the conjecture:
\begin{example}
Take $ q=2 $, let $ K $ be the field of definition of the $ (t^5 + t^2 + 1) $\dash torsion of the Carlitz module, and $ R $ the integral closure of $ k[t] $ in  $ K $. We computed
\[
	\frac{ \zeta_R( 1 ) }{ \Reg_R } = t^{20} + t^{17} + t^{15} + t^{14} + t^{13} + t^{11}
		 + t^{10} + t^{6} + t^{4} + t + 1 + O(t^{-15}).
\]
Note that $ H_R $ is isomorphic with 
\[
	 \frac{ E( K_\infty ) }{ \exp_E( \Lie_E(K_\infty) ) + E( R) }.
\]
This quotient can be computed by first taking the quotient of $ E( K_\infty ) $ by
$ L' + E( R ) $ for some $ L' \subset \exp_E( E(K_\infty) ) $ as in Lemma \ref{logD}, which already gives something finite, and then modding out by
successively more and more images of the exponential until the dimension agrees with the one
predicted by Theorem \ref{valuations}. We computed the action of $ t $ on
the quotient and found $ | H_R | =  t^{20} + t^{17} + \cdots + t + 1 $, as predicted by the conjecture.
\end{example}

We end with a couple of remarks on Conjecture \ref{conjecture}.

\begin{remarks}
\begin{enumerate}

\item Conjecture \ref{conjecture} refines a conjecture given in \cite{Taelman09b}. The latter
treats not only the Carlitz module, but also Drinfeld modules of higher rank that have everywhere good reduction, and it could be refined in a similar way. 

\item If we interpret the complex $ E(R) \to \H^1( G, \Lambda_E ) $ as an analogue of
the complex $ O_F^\times \to \{\pm 1\}^{r_1} $ then it appears as if there is a
``class number'' factor missing in Conjecture \ref{conjecture}. However the module
$ H_R $ can be interpreted as an $ \Ext^2 $ of a ``Carlitz shtuka'' 
by a ``trivial shtuka'', which in turn suggests that $ H_R $ \emph{is} the ``class module''.
(Compare for example with $ \Ext^2_X(\mathbf{Z},\mathbf{Z}(1)) = \Pic(X) $ in motivic
cohomology, \emph{see e.g.} \cite[p. 25]{Mazza06}.) 

\item Moreover, $ \exp^{-1}( E(R) ) $ can be interpreted as the $ \Ext^1 $ between the same
objects. This suggests that it should be possible to interpret the $v$\dash adic (with $ v \ne \infty $) special value formulas of V.~Lafforgue \cite{Lafforgue09} in a way similar to 
Conjecture \ref{conjecture}. It also suggests that the techniques of \cite{Lafforgue09} can be used to attack Conjecture \ref{conjecture}. 

\end{enumerate}
\end{remarks}

We intend to return to these points in future papers.

\section{Acknowledgements} 

I would like to thank Bart de Smit for showing me the formulation of Dirichlet's
unit theorem that is used in this note, and David Goss and Bjorn Poonen for their comments on 
earlier versions.

\bibliographystyle{plain}
\bibliography{../master}

\def\cprime{$'$} \def\cprime{$'$} \def\cprime{$'$} \def\cprime{$'$}
  \def\cprime{$'$} \def\cprime{$'$} \def\cprime{$'$}
\begin{thebibliography}{10}

\bibitem{Anderson96}
Greg~W. Anderson.
\newblock Log-algebraicity of twisted {$A$}-harmonic series and special values
  of {$L$}-series in characteristic {$p$}.
\newblock {\em J. Number Theory}, 60(1):165--209, 1996.
\newblock \href{http://www.ams.org/mathscinet-getitem?mr=1405732}{MR1405732}.

\bibitem{Carlitz35}
Leonard Carlitz.
\newblock On certain functions connected with polynomials in a {G}alois field.
\newblock {\em Duke Math. J.}, 1(2):137--168, 1935.
\newblock \href{http://www.ams.org/mathscinet-getitem?mr=1545872}{MR1545872}.

\bibitem{Drinfeld74E}
V.~G. Drinfel{\cprime}d.
\newblock Elliptic modules.
\newblock {\em Mat. Sb. (N.S.)}, 94(136):594--627, 656, 1974.
\newblock \href{http://www.ams.org/mathscinet-getitem?mr=0384707}{MR0384707}.

\bibitem{Goss92}
David Goss.
\newblock {$L$}-series of {$t$}-motives and {D}rinfel\cprime d modules.
\newblock In {\em The arithmetic of function fields ({C}olumbus, {OH}, 1991)},
  volume~2 of {\em Ohio State Univ. Math. Res. Inst. Publ.}, pages 313--402. de
  Gruyter, Berlin, 1992.
\newblock \href{http://www.ams.org/mathscinet-getitem?mr=1196527}{MR1196527}.

\bibitem{Igusa00}
Jun-ichi Igusa.
\newblock {\em An introduction to the theory of local zeta functions},
  volume~14 of {\em AMS/IP Studies in Advanced Mathematics}.
\newblock American Mathematical Society, Providence, RI, 2000.
\newblock \href{http://www.ams.org/mathscinet-getitem?mr=1743467}{MR1743467}.

\bibitem{Lafforgue09}
Vincent Lafforgue.
\newblock Valeurs sp\'eciales des fonctions {L} en caract\'eristique {$p$}.
\newblock {\em J. Number Theory}, 129(10):2600--2634, 2009.
\newblock \href{http://www.ams.org/mathscinet-getitem?mr=2541033}{MR2541033}.

\bibitem{Mazza06}
Carlo Mazza, Vladimir Voevodsky, and Charles Weibel.
\newblock {\em Lecture notes on motivic cohomology}, volume~2 of {\em Clay
  Mathematics Monographs}.
\newblock American Mathematical Society, Providence, RI, 2006.
\newblock \href{http://www.ams.org/mathscinet-getitem?mr=2242284}{MR2242284}.

\bibitem{Poonen97}
Bjorn Poonen.
\newblock Local height functions and the {M}ordell-{W}eil theorem for
  {D}rinfel\cprime d modules.
\newblock {\em Compositio Math.}, 97(3):349--368, 1995.
\newblock \href{http://www.ams.org/mathscinet-getitem?mr= 1353279}{MR1353279}.

\bibitem{Serre62}
Jean-Pierre Serre.
\newblock {\em Corps locaux}.
\newblock Publications de l'Institut de Math\'ematique de l'Universit\'e de
  Nancago, VIII. Actualit\'es Sci. Indust., No. 1296. Hermann, Paris, 1962.
\newblock \href{http://www.ams.org/mathscinet-getitem?mr=0150130}{MR0150130}.

\bibitem{Taelman09b}
Lenny Taelman.
\newblock Special {$L$}-values of {$t$}-motives: a conjecture.
\newblock {\em Int. Math. Res. Not. IMRN}, (16):2957--2977, 2009.
\newblock \href{http://www.ams.org/mathscinet-getitem?mr=2533793}{MR2533793}.

\bibitem{Tate84}
John Tate.
\newblock {\em Les conjectures de {S}tark sur les fonctions {$L$} d'{A}rtin en
  {$s=0$}}, volume~47 of {\em Progress in Mathematics}.
\newblock Birkh\"auser Boston Inc., Boston, MA, 1984.
\newblock \href{http://www.ams.org/mathscinet-getitem?mr=782485}{MR782485}.

\end{thebibliography}

\end{document}